\newtheorem{thm}{Theorem}[section]
\newtheorem{cor}[thm]{Corollary}
\newtheorem{prop}[thm]{Proposition}
\theoremstyle{definition}
\newtheorem{dfn}[thm]{Definition}
\theoremstyle{remark}
\newtheorem*{ac}{Acknowledgements}
\def\A{\operatorname{A}}
\def\B{\operatorname{B}}
\def\C{\operatorname{C}}
\def\E{\operatorname{E}}
\def\Mod{\operatorname{Mod}}
\def\depth{\operatorname{depth}}
\def\Soc{\operatorname{Soc}}
\def\Ann{\operatorname{Ann}}
\def\uleq#1{\underset{#1}{\leq}}
\def\usleq#1#2{\underset{#1\hspace{5pt}}{\leq_{#2}}}
\def\upreceq#1{\underset{#1}{\preceq}}
\def\updR{\underset{R\hspace{5pt}}{\preceq'}}
\def\updRx{\underset{R/xR\hspace{4pt}}{\preceq'}}
\def\updRy{\underset{R/yR\hspace{4pt}}{\preceq'}}
\def\updRxi{\underset{R/x_iR\hspace{5pt}}{\preceq'}}
\def\Add{\operatorname{Add}}
\def\Hom{\operatorname{Hom}}
\def\Ass{\operatorname{Ass}}
\def\m{\mathfrak{m}}
\def\p{\mathfrak{p}}
\def\q{\mathfrak{q}}
\def\P{\mathcal{P}}
\def\Q{\mathcal{Q}}
\def\X{\mathcal{X}}
\begin{document}
\allowdisplaybreaks
\title[Characterizing regular local rings via analogues of $(*)$-properties]{Characterizing regular local rings\\via analogues of $(*)$-properties}
\author{Shinnosuke Kosaka}
\address{Graduate School of Mathematics, Nagoya University, Furocho, Chikusaku, Nagoya 464-8602, Japan}
\email{kosaka.shinnosuke.d2@s.mail.nagoya-u.ac.jp}
\subjclass[2020]{13D02, 13H10}
\keywords{$(*)$-property, local ring, regular ring, residue field, syzygy module}
\begin{abstract}
Let $R$ be a commutative noetherian local ring. As analogues of $(*)$-properties introduced by Ghosh, Gupta, and Puthenpurakal, we introduce and study $(\A)$-properties, $(\B)$-properties, and $(\C)$-relations. Using these three notions, we establish a criterion for a local ring to be regular. This recovers and refines the main result of Ghosh, Gupta, and Puthenpurakal and has further applications.
\end{abstract}
\maketitle

\section{Introduction}
Throughout this paper, all rings are assumed to be commutative noetherian rings. For a local ring $R$ with residue field $k$ and an integer $n\geq0$, we denote by $\Omega_R^nk$ the $n$-th syzygy of $k$ in its minimal $R$-free resolution.

Characterizing regular local rings in terms of syzygies of the residue field has been frequently studied; see \cite{D,GGP,LV,M,T} for instance. Ghosh, Gupta, and Puthenpurakal \cite[Theorem 3.1]{GGP} introduced the notion of $(*)$-properties and gave the following criterion for local rings to be regular.

\begin{thm}[Ghosh--Gupta--Puthenpurakal]\label{thmGGP}
Let $R$ be a local ring with residue field $k$ and $\P$ a $(*)$-property. Suppose that there exists a surjective $R$-module homomorphism $f:\bigoplus_{n\in\Lambda}(\Omega_R^nk)^{\oplus j_n}\to L$ where $\Lambda$ is a finite set of non-negative integers, $j_n$ is a positive integer, and $L\neq0$ satisfies $\P$. Then $R$ is regular.
\end{thm}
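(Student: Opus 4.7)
The plan is to show that $\operatorname{pd}_R k < \infty$ and then apply the Auslander--Buchsbaum--Serre theorem to conclude that $R$ is regular. The key inputs are the surjection $f$ together with the defining homological features of a $(*)$-property.

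First, setting $N = \max \Lambda$, I would record the short exact sequence
\[
0 \to K \to \bigoplus_{n\in\Lambda}(\Omega_R^n k)^{\oplus j_n} \to L \to 0
\]
induced by $f$. Because the $\Omega_R^n k$ all sit inside a single minimal free resolution of $k$, the source admits a controlled homological description: its Tor and Ext against $k$ can be read off from those of $k$ itself, with shifts by $n$. In particular, if some $\Omega_R^n k$ turns out to have finite projective dimension, the same holds for $k$ up to the shift by $n$.

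Next, I would invoke the $(*)$-property of $L$. The intent of such a property is that a nonzero module satisfying $\P$ cannot be a proper quotient of a module of infinite projective dimension without forcing the ambient module to have finite projective dimension as well; equivalently, $\P$ forces Ext or Tor vanishing that the syzygies of $k$ cannot exhibit unless the minimal free resolution of $k$ terminates. Applied to the surjection above, this should yield $\operatorname{pd}_R \Omega_R^n k < \infty$ for some (hence every) $n \in \Lambda$, giving $\operatorname{pd}_R k < \infty$ and finishing the proof by Auslander--Buchsbaum--Serre.

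The main obstacle lies in this transfer step from $L$ back to $\Omega_R^n k$. The precise definition of $(*)$-property in \cite{GGP} is engineered exactly for such transfers, but the mechanism must accommodate arbitrary positive multiplicities $j_n$ and an arbitrary finite index set $\Lambda$ simultaneously. In practice this is handled either by a Nakayama-style minimality argument that splits off a nonzero summand matching a single $\Omega_R^n k$, or by induction on $|\Lambda|$, showing at each step that $\P$ is preserved under passing to a suitable quotient or direct summand. Verifying this preservation is where the fine structure of $\P$ genuinely enters, and it is the delicate point I would spend the most time on.
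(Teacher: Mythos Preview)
Your proposal rests on a mischaracterization of what a $(*)$-property is. The two axioms are purely about annihilators and stability under killing a regular element: $(*1)$ says that $\P$ is preserved when one passes from $R$ to $R/xR$ for $x$ an $R$-regular element, and $(*2)$ says that if $\depth R=0$ and $M$ satisfies $\P$ then $\Ann_R M=0$. Neither axiom yields any Ext or Tor vanishing, and there is no mechanism by which a surjection onto $L$ would force $\operatorname{pd}_R\Omega_R^n k<\infty$ in the way you describe. The sentence ``$\P$ forces Ext or Tor vanishing that the syzygies of $k$ cannot exhibit unless the minimal free resolution of $k$ terminates'' is simply not supported by the definition, so the central transfer step of your argument has no content.

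The actual proof (both in \cite{GGP} and, in generalized form, in this paper) proceeds by induction on $\depth R$ and never touches projective dimension directly. For the inductive step one chooses an $R$-regular element $x\in\m\setminus\m^2$; condition $(*1)$ guarantees that $L/xL$ still satisfies $\P$ over $R/xR$, and the isomorphism $\Omega_R^n k\otimes_R R/xR\cong\Omega_{R/xR}^n k\oplus\Omega_{R/xR}^{n-1}k$ (see \cite[Corollary~5.3]{T}) keeps the source in the right form, so one may conclude that $R/xR$ is regular and hence so is $R$. The base case $\depth R=0$ is handled by an annihilator argument: for $R\ne k$ one has $\Soc R\subset\Ann_R\Omega_R^n k$ for every $n\ge 0$ (\cite[Lemma~2.1]{GGP}), whence $\Soc R\subset\Ann_R L$; but $(*2)$ forces $\Ann_R L=0$, contradicting $\Soc R\ne 0$. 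Thus $R=k$ is a field. None of this involves showing $\operatorname{pd}_R k<\infty$ by homological vanishing, and your outline would need to be rebuilt along these lines.
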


Here, a property $\P$ of finitely generated modules over local rings is called a {\em $(*)$-property} if it satisfies the following two conditions for all local rings $R$ and all finitely generated $R$-modules $M$:

\begin{enumerate}
\item[$(*1)$]
If $M$ satisfies $\P$ as an $R$-module, then $M/xM$ satisfies $\P$ as an $R/xR$-module for every $R$-regular element $x$.
\item [$(*2)$]
If $\depth R=0$ and $M$ satisfies $\P$ as an $R$-module, then $\Ann_RM=0$.
\end{enumerate}

Typical examples of a $(*)$-property are being a semidualizing module and being a nonzero maximal Cohen--Macaulay module of finite injective dimension. In what follows, removing the assumption of finite generation, we say that a property $\P$ of modules over local rings a {\em $(*)$-property} if $\P$ satisfies the above two conditions $(*1)$ and $(*2)$ for all local rings $R$ and all $R$-modules $M$.

We say that $\leq$ is a {\em relation of modules over local rings} (or simply a {\em relation}) when for every local ring $R$, it refers to a relation $\uleq{R}$ over $R$-modules. Theorem \ref{thmGGP} is generalized as follows:

\begin{thm}[Corollary \ref{cor property relation}]\label{thm intro}
Let $\P$, $\Q$ be properties of (possibly infinitely generated) modules over local rings and $\leq$ a relation of modules over local rings. Assume that $\P$, $\Q$, and $\leq$ satisfy the following conditions for all local rings $(R,\m,k)$ and all $R$-modules $X$, $Y$:
\begin{enumerate}[\rm(1)]
\item
If $\depth R>0$ and $X$ satisfies $\P$ (resp. $\Q$) as an $R$-module, then there exist finitely many non-maximal prime ideals $\p_1,\dots,\p_n$ such that $X/xX$ satisfies $\P$ (resp. $\Q$) as an $R/xR$-module for every $R$-regular element $x\in\m\setminus(\m^2\cup(\bigcup_{i=1}^n\p_i))$.
\item
If $\depth R=0$ and $X$ satisfies $\P$ as an $R$-module, then $\Soc R\not\subset\Ann_RX$.
\item
If $\depth R=0$, $X$ satisfies $\Q$ as an $R$-module, and $R\neq k$, then $\Soc R\subset\Ann_RX$.
\item
If $\depth R>0$ and $X\uleq{R}Y$, then there exist finitely many non-maximal prime ideals $\p_1,\dots,\p_n$ such that $X/xX\uleq{R/xR}Y/xY$ for every $R$-regular element $x\in\m\setminus(\m^2\cup(\bigcup_{i=1}^n\p_i))$.
\item
If $\depth R=0$ and $X\uleq{R}Y$, then $\Ann_RX\subset\Ann_RY$.
\end{enumerate}
A local ring $R$ is regular if there exist $R$-modules $M,N$ such that $M$ satisfies $\Q$, $N$ satisfies $\P$, and $M\uleq{R} N$.
\end{thm}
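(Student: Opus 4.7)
I would prove this by induction on $\depth R$. The base case $\depth R=0$ is handled directly using conditions (2), (3), and (5); the inductive step uses (1) and (4), together with prime avoidance, to reduce the statement to a local ring $R/xR$ of strictly smaller depth for a well-chosen $R$-regular element $x\in\m\setminus\m^2$. One then appeals to the standard fact that if $x$ is a minimal generator of $\m$ and $R/xR$ is regular, then so is $R$.

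\textbf{Base case.} Suppose $\depth R=0$. If $R=k$, there is nothing to prove. Otherwise $R\neq k$, so condition (3) applied to $M$ gives $\Soc R\subset\Ann_R M$, and condition (5) applied to $M\uleq{R}N$ gives $\Ann_R M\subset\Ann_R N$. Composing these inclusions yields $\Soc R\subset\Ann_R N$, which contradicts condition (2) applied to $N$. Hence $R=k$, and $R$ is regular.

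\textbf{Inductive step.} Suppose $\depth R>0$ and the result is known for local rings of strictly smaller depth. Applying condition (1) to $M$ and to $N$, and condition (4) to $M\uleq{R}N$, produces three finite sets of non-maximal prime ideals. Let $\p_1,\dots,\p_n$ be the union of these three sets together with every element of $\Ass R$; each associated prime of $R$ is non-maximal because $\depth R>0$. Since $\m\not\subset\p_i$ for every $i$ and $\m\not\subset\m^2$ by Nakayama, and since $\m^2$ is the only non-prime ideal in the list, prime avoidance yields an element $x\in\m\setminus(\m^2\cup\bigcup_i\p_i)$. By construction $x$ is $R$-regular and lies outside $\m^2$; applying (1) and (4) to this $x$ shows that $M/xM$ satisfies $\Q$, $N/xN$ satisfies $\P$, and $M/xM\uleq{R/xR}N/xN$ as $R/xR$-modules. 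Since $\depth R/xR=\depth R-1$, the inductive hypothesis gives that $R/xR$ is regular, and because $x$ is a minimal generator of $\m$, $R$ itself is regular.

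\textbf{Main obstacle.} The only delicate step is the organization of the prime avoidance: one must produce a single $x$ that avoids $\m^2$, the three finite families of non-maximal primes supplied by (1) and (4), and the associated primes of $R$ (the last to guarantee regularity of $x$). This combined avoidance is possible precisely because $\m^2$ is the only non-prime ideal on the list, so the classical prime avoidance lemma applies. The rest of the argument is formal: hypotheses (1)--(5) have been calibrated so that the data $(M,N,\uleq{R})$ descend cleanly to $R/xR$, and the induction on $\depth R$ does the remaining work.
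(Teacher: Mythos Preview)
Your proof is correct. The induction on $\depth R$ with prime avoidance is exactly the engine the paper uses, and your base case via conditions (2), (3), (5) is the same contradiction the paper obtains.

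The organization differs slightly. The paper factors the argument into two pieces: first it proves (Theorem~\ref{thm regularity}) that a local ring is regular once a \emph{single} module $M$ lies in $\P_{\A'}(R)\cap\P_{\B'}(R)$, by the same depth induction you run; then it shows (Proposition~\ref{prop property relation}) that the relation $\preceq'$ pulls the $(\A')$-property back along it, by packaging ``there exists $N\in\P_{\A'}(R)$ with $M\updR N$'' as a new property and checking it is itself an $(\A')$-property. Corollary~\ref{cor property relation} then follows in one line. Your version carries the triple $(M,N,\uleq{R})$ through the induction simultaneously, which is more direct and avoids the auxiliary abstraction. The paper's decomposition, on the other hand, isolates reusable statements (e.g.\ Proposition~\ref{prop property relation} is later used to get transitivity and compatibility with $\Add$), so each approach has its advantages.
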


Let $R$ be a local ring. We denote by $\Mod R$ the category of $R$-modules. For a full subcategory $\X$ of $\Mod R$, we denote by $\Add\X$ the full subcategory of $\Mod R$ whose objects are all direct summands of direct sums of objects in $\X$. An $R$-module $L$ is called a {\em balanced big Cohen--Macaulay module} if every system of parameters of $R$ is an $L$-sequence. A finitely generated $R$-module $L$ is called {\em deep} if $\depth L\geq\depth R$.

As an application of Theorem \ref{thm intro}, we obtain the following criterion for the regularity.

\begin{cor}[Proposition \ref{ex C relation}]\label{cor intro}
Let $R$ be a local ring with residue field $k$. Let $M$ be an $R$-module in $\Add\{\Omega_R^nk\mid n\geq0\}$ and $N$ an $R$-module satisfying some $(*)$-property. Then $R$ is regular if either of the following conditions holds:
\begin{enumerate}[\rm(1)]
\item 
There exists an exact sequence $M\to N\to0$ of $R$-modules.
\item 
There exists an exact sequence $0\to N\to M\to L\to 0$ of $R$-modules such that $L$ is a balanced big Cohen--Macaulay $R$-module, or belongs to $\Add K$ for some deep $R$-module $K$.
\end{enumerate}
\end{cor}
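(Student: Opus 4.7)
The plan is to derive each case from Theorem \ref{thm intro} by choosing suitable properties $\P$, $\Q$ and a relation $\uleq{}$. In both cases I would take $\P$ to be the given $(*)$-property and $\Q$ to be the property of belonging to $\Add\{\Omega_R^n k \mid n \geq 0\}$; only the relation $\uleq{}$ will differ. For $\P$, conditions (1) and (2) of Theorem \ref{thm intro} follow directly from $(*1)$ and $(*2)$ (with an empty list of avoided primes, and using that $\depth R = 0$ forces $\Soc R \neq 0$). For $\Q$, condition (1) rests on the standard change-of-rings isomorphism
\[
\Omega_R^n(k)/x\Omega_R^n(k) \cong \Omega_{R/xR}^n(k) \oplus \Omega_{R/xR}^{n-1}(k) \qquad (n \geq 1)
\]
valid for any $R$-regular $x \in \m \setminus \m^2$; since $\Add$ is closed under direct sums and direct summands, this propagates to every $X \in \Add\{\Omega_R^n k\}$. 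Condition (3) follows because $\Omega_R^n k \subseteq \m F_{n-1}$ by minimality of the resolution, so $\Soc R \subseteq \m$ (which holds whenever $R \neq k$) annihilates each syzygy, and hence every element of $\Add\{\Omega_R^n k\}$.

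For case (1), I would set $X \uleq{R} Y$ to mean there exists a surjection $X \twoheadrightarrow Y$. Condition (4) is right exactness of $(-)\otimes_R R/xR$, and condition (5) is the observation that a surjection preserves annihilators. The hypothesis $M \to N \to 0$ then realises $M \uleq{R} N$, so Theorem \ref{thm intro} yields regularity.

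For case (2), I would set $X \uleq{R} Y$ to mean there exists a short exact sequence $0 \to Y \to X \to L \to 0$ in which $L$ is either a balanced big Cohen--Macaulay $R$-module or lies in $\Add K$ for some deep $R$-module $K$. Condition (5) is immediate from $Y \subseteq X$. For condition (4) I have to secure $L$-regularity of $x$ (so that tensoring preserves the sequence) and descent of $L$'s property to $L/xL$. If $L$ is balanced big Cohen--Macaulay, then any $R$-regular $x$ is part of a system of parameters hence $L$-regular, and a system of parameters of $R/xR$ extends by $x$ to one of $R$, so $L/xL$ remains balanced big Cohen--Macaulay over $R/xR$. If $L \in \Add K$ with $K$ deep, then $\Ass_R K$ is a finite set of non-maximal primes (since $\depth K \geq \depth R > 0$); adding these to the avoided primes makes $x$ both $K$-regular and $L$-regular, and $L/xL \in \Add(K/xK)$ with $\depth_{R/xR}(K/xK) = \depth_R K - 1 \geq \depth(R/xR)$.

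The principal technical point is the syzygy decomposition isomorphism invoked for condition (1) of $\Q$; this is classical for $x \in \m \setminus \m^2$ regular, but it must be combined carefully with the closure properties of $\Add$ to reach arbitrary modules in $\Add\{\Omega_R^n k\}$ rather than individual syzygies.
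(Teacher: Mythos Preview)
Your proposal is correct and follows essentially the same route as the paper: the paper packages the verifications you carry out into Proposition~\ref{ex B property} (showing that membership in $\Add\{\Omega_R^n k\}$ is a $(\B)$-property, via the same syzygy decomposition and socle-annihilation arguments) and Proposition~\ref{ex C relation} (showing that the surjection, balanced-big-CM-cokernel, and deep-$\Add$-cokernel relations are $(\C)$- or $(\C')$-relations, by the same regularity and depth-counting arguments), and then invokes Corollary~\ref{cor property relation}, which is exactly Theorem~\ref{thm intro}. The only cosmetic difference is that the paper treats the two possibilities for $L$ in case~(2) as separate relations $\leq_2$ and $\leq_3$ rather than combining them, but this changes nothing.
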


It is natural to ask whether $R$ is regular if there is an injective $R$-module homomorphism from a special module to a finite direct sum of syzygies of $k$. According to \cite[Example 3.9]{GGP}, it is not true in general. However, Corollary \ref{cor intro} says that this statement is true if we impose a suitable assumption on the cokernel of the homomorphism. Moreover, Corollary \ref{cor intro} yields the following simple criterion for the regularity, which extends a theorem of Dutta \cite[Corollary 1.3]{D} to modules that are not necessarily finitely generated.

\begin{cor}[Proposition \ref{ex A property}]\label{cor intro faithfully flat}
Let $R$ be a local ring with residue field $k$. Suppose that there exists a surjective homomorphism from a direct sum of syzygies of $k$ to a faithfully flat R-module. Then R is regular. 
\end{cor}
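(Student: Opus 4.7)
The plan is to derive this statement as an immediate consequence of Corollary~\ref{cor intro}(1). Write the assumed surjection as $f\colon M\twoheadrightarrow F$, where $M=\bigoplus_{n\in\Lambda}(\Omega_R^nk)^{\oplus j_n}$ lies tautologically in $\Add\{\Omega_R^nk\mid n\geq 0\}$ and $F$ is the given faithfully flat $R$-module. Then $M\to F\to 0$ is exact, so Corollary~\ref{cor intro}(1) will yield the regularity of $R$ provided that being a nonzero faithfully flat module over a local ring can be recognised as a $(*)$-property in the sense defined in the excerpt (no finite generation assumed).

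The verification of the two axioms is short. For $(*2)$, a faithfully flat module $F$ is in particular faithful, so $\Ann_R F=0$ regardless of $\depth R$. For $(*1)$, let $F$ be faithfully flat over a local ring $R$ and let $x\in R$ be $R$-regular. The base change $F/xF\cong F\otimes_R R/xR$ is flat over $R/xR$, and for any $R/xR$-module $L$ one has
\[
(F/xF)\otimes_{R/xR}L\;\cong\;F\otimes_R L,
\]
which vanishes only if $L=0$ by the faithful flatness of $F$ over $R$. Hence $F/xF$ is faithfully flat over $R/xR$, and in particular nonzero (take $L=R/xR$, which is itself nonzero because $x$ is a regular element of the local ring $R$).

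Once these two conditions are in place, applying Corollary~\ref{cor intro}(1) to the exact sequence $M\to F\to 0$ concludes that $R$ is regular. I do not expect any serious obstacle: the substantive content has already been absorbed into Corollary~\ref{cor intro}, and the only new ingredient is the elementary observation that faithful flatness descends along base change by a regular element. It is worth remarking that, because we are given a \emph{surjection} onto $F$, no extra hypothesis on a cokernel (as in Corollary~\ref{cor intro}(2)) is needed; this is consistent with \cite[Example 3.9]{GGP}, which shows that the analogous statement for an injection into a direct sum of syzygies would fail without some supplementary condition.
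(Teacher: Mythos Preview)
Your proposal is correct and follows essentially the same route as the paper. The paper records (in Proposition~\ref{ex A property}) that faithful flatness is an $(\A)$-property and then lets the general machinery of Corollary~\ref{cor property relation} together with Propositions~\ref{ex B property} and~\ref{ex C relation} do the rest; you instead verify the slightly stronger $(*)$-axioms $(*1)$ and $(*2)$ directly so that you can quote the already-packaged Corollary~\ref{cor intro}(1), but the underlying verification and logical flow are the same.
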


In the next section we shall prove Theorem \ref{thm intro} and Corollaries \ref{cor intro}, \ref{cor intro faithfully flat}.

\section{Main Results}

Let $\P$ be a property of modules over local rings. For a local ring $R$, we denote by $\mathcal{P}(R)$ the full subcategory of $\Mod R$ consisting of $R$-modules that satisfy $\P$. We begin with introducing the notions of $(\A)$- and $(\B)$-properties and their variants.

\begin{dfn}
Consider the following conditions for a property $\P$ of modules over local rings.
\begin{enumerate}
\item[(P1)]
Let $R$ be a local ring with maximal ideal $\m$. If $M\in \P(R)$, then $M/xM \in \P(R/xR)$ for every $R$-regular element $x\in\m\setminus\m^2$.
\item[(P1')] 
Let $R$ be a local ring with maximal ideal $\m$. If $\depth R>0$ and $M\in \P(R)$, then there exist finitely many non-maximal prime ideals $\p_1,\dots,\p_n$ such that $M/xM \in \P(R/xR)$ for every $R$-regular element $x\in\m\setminus(\m^2\cup(\bigcup_{i=1}^n\p_i))$.
\item[(P2)]
Let $R$ be a local ring. If $\depth R=0$ and $M\in\P(R)$, then $\Soc R\not\subset\Ann_RM$.
\item[(P3)] 
Let $R$ be a local ring with residue field $k$. If $\depth R=0$, $M\in\P(R)$, and $R\neq k$, then $\Soc R\subset\Ann_RM$.
\end{enumerate}
We say that $\P$ is an {\em $(\A)$-property} (resp. {\em $(\A')$-property}) if $\P$ satisfies (P1) and (P2) (resp. (P1') and (P2)). We say that $\P$ is a {\em $(\B)$-property} (resp. {\em $(\B')$-property}) if $\P$ satisfies (P1) and (P3) (resp. (P1') and (P3)).
\end{dfn}

It is clear that (P1) implies (P1'). Therefore, an $(\A)$-property (resp. $(\B)$-property) is an $(\A')$-property (resp. $(\B')$-property). We should notice that a $(*)$-property is an $(\A)$-property; in fact, $(*1)$ (resp. $(*2)$) implies (P1) (resp. (P2)).

We give some examples of $(\A)$-properties.

\begin{prop}\label{ex A property}
We define the properties $\P_1$, $\P_2$, $\P_3$ as follows:
\begin{enumerate}[\rm(1)]
\item
We say that a module $M$ over a local ring $R$ satisfies $\P_1$ if $M$ is a semidualizing $R$-module.
\item 
We say that a module $M$ over a local ring $R$ satisfies $\P_2$ if $M$ is a non-zero maximal Cohen--Macaulay $R$-module of finite injective dimension.
\item
We say that a module $M$ over a local ring $R$ satisfies $\P_3$ if $M$ is a faithfully flat $R$-module.
\end{enumerate}
Then, $\P_1$, $\P_2$, $\P_3$ are all $(\A)$-properties.
\end{prop}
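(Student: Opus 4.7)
The plan is to verify conditions (P1) and (P2) separately for each of the three properties $\P_1,\P_2,\P_3$, relying on well-known preservation results under reduction modulo a regular element and on the fact that the modules in question are faithful in the relevant depth-zero situations. Notice that for (P1) in all three cases the restriction $x\in\m\setminus\m^2$ is not actually used; any $R$-regular element $x$ will do.

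For $\P_1$, I would first recall that a semidualizing $R$-module $C$ satisfies $\Ass_RC=\Ass R$, so every $R$-regular $x$ is $C$-regular; the standard change-of-rings calculation then gives $\Hom_{R/xR}(C/xC,C/xC)\cong R/xR$ and $\Ext^{i}_{R/xR}(C/xC,C/xC)=0$ for $i\geq1$, so $C/xC$ is semidualizing over $R/xR$, verifying (P1). For (P2), I would use that a semidualizing module is faithful (since the homothety $R\to\Hom_R(C,C)$ is an isomorphism), hence $\Ann_RC=0$; because $\depth R=0$ forces $\Soc R\neq0$, we get $\Soc R\not\subset\Ann_RC$.

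For $\P_2$, I would verify (P1) by noting that an MCM module $M$ has $\depth_RM=\depth R$, so every $R$-regular $x$ is $M$-regular; then $M/xM$ is MCM over $R/xR$ and nonzero by Nakayama, and a standard change-of-rings argument (the injective resolution of $M$ reduces modulo $x$) shows that $M/xM$ has finite injective dimension over $R/xR$. For (P2), the hypothesis $\depth R=0$ together with existence of a finitely generated module of finite injective dimension forces $\dim R=0$ by Bass's theorem, hence $R$ is Artinian and $\mathrm{id}_RM=0$, so $M$ is a nonzero injective $R$-module. Over an Artinian local ring every nonzero injective module is a direct sum of copies of the (faithful) injective hull $E_R(k)$, so $\Ann_RM=0$, whence $\Soc R\not\subset\Ann_RM$.

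For $\P_3$, (P1) is immediate: flatness is preserved under base change, and $M/xM\neq0$ because $M/\m M\neq 0$ (as $\m\supset xR$), so $M/xM$ is faithfully flat over $R/xR$. For (P2), faithfully flat modules are faithful, so $\Ann_RM=0$ and the conclusion follows as before. The only step that requires any real care is the inj-dim case of $\P_2$, where one must remember to cite the change-of-rings result for injective dimension modulo a regular element and the structure of injective modules over Artinian local rings; the other verifications are direct.
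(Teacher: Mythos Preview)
Your argument is correct and aligns with the paper's approach: the paper simply cites \cite[Examples~2.4 and~2.5]{GGP} for $\P_1$ and $\P_2$ (where essentially your arguments appear) and handles $\P_3$ directly via base change together with $(\Soc R)M\cong(\Soc R)\otimes_R M\ne 0$, which is just a variant of your ``faithfully flat $\Rightarrow$ faithful'' step. One small point to tighten in your treatment of $\P_2$: maximal Cohen--Macaulay means $\depth_R M=\dim R$, not $\depth R$, and the inference that every $R$-regular element is $M$-regular does not follow from equality of depths alone; the clean justification is that the existence of a nonzero finitely generated module of finite injective dimension forces $R$ to be Cohen--Macaulay, after which an MCM module localizes to MCM and hence has $\Ass_R M$ contained in the minimal primes of $R$, so in $\Ass R$.
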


\begin{proof}
Since $\P_1$ and $\P_2$ have already been shown to be $(*)$-properties in \cite[Example 2.4 and 2.5]{GGP}, we only prove that $\P_3$ is an $(\A)$-property. Let $(R,\m)$ be a local ring. If $M$ is a faithfully flat $R$-module and $x\in\m\setminus\m^2$, then $M/xM$ is a faithfully flat $R/xR$-module by base change. Next, assume that $M$ is a faithfully flat $R$-module and $\depth R=0$. Then $\Soc R\ne0$, and since $M$ is faithfully flat, we have $(\Soc R)M\simeq(\Soc R)\otimes M\ne0$. Hence $\Soc R\not\subset\Ann_RM$.
\end{proof}

We also give some examples of $(\B)$-properties.

\begin{prop}\label{ex B property}
We define the properties $\P_4$, $\P_5$ as follows:
\begin{enumerate}[\rm(1)]
\item
We say that a module $M$ over a local ring $R$ satisfies $\P_4$ if $M\in\Add\{\Omega_R^nk\mid n\geq0\}$, where $k$ denotes the residue field of $R$.
\item 
We say that a module $M$ over a local ring $R$ satisfies $\P_5$ if $R$ is a Cohen--Macaulay local ring with a canonical module $\omega$ and $M\in\Add\{(\Omega_R^nk)^\dag\mid n\geq\dim R\}$, where $k$ is the residue field of $R$ and $(-)^\dag\coloneqq\Hom_R(-,\omega)$.
\end{enumerate}
Then both $\P_4$ and $\P_5$ are $(\B)$-properties.
\end{prop}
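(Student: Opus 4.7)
The plan is to verify conditions (P1) and (P3) for $\P_4$ and $\P_5$ separately, exploiting the behavior of syzygies of $k$ under reduction by an element $x \in \m \setminus \m^2$ that is $R$-regular.

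For $\P_4$, the crucial input is the classical decomposition
$$\Omega_R^n k \,/\, x \Omega_R^n k \;\cong\; \Omega_{R/xR}^n k \;\oplus\; \Omega_{R/xR}^{n-1} k \quad (n \geq 1),$$
valid whenever $x \in \m \setminus \m^2$ is $R$-regular (together with the trivial equality $k/xk = k$ for $n=0$). Since the formation of $M/xM$ commutes with direct sums and with the passage to direct summands, applying this termwise shows that (P1) is inherited by the closure $\Add\{\Omega_R^n k \mid n \geq 0\}$. For (P3), I note that $R \neq k$ forces $\Soc R \subset \m$; combined with the containment $\Omega_R^n k \subseteq \m F_{n-1}$ inside any minimal free resolution $F_\bullet \to k$, this immediately gives $\Soc R \cdot \Omega_R^n k = 0$ for every $n \geq 0$, and this annihilation descends to every object of $\Add\{\Omega_R^n k \mid n \geq 0\}$.

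For $\P_5$, write $\bar R = R/xR$, which is Cohen--Macaulay with canonical module $\bar\omega = \omega/x\omega$. For $n \geq \dim R$, the syzygy $\Omega_R^n k$ is maximal Cohen--Macaulay, so applying $\Hom_R(\Omega_R^n k, -)$ to $0 \to \omega \xrightarrow{x} \omega \to \bar\omega \to 0$ and using the vanishing of $\operatorname{Ext}_R^1(\Omega_R^n k, \omega)$ yields
$$(\Omega_R^n k)^\dag \,/\, x (\Omega_R^n k)^\dag \;\cong\; \Hom_R(\Omega_R^n k,\, \bar\omega) \;\cong\; \Hom_{\bar R}\!\bigl(\Omega_R^n k / x \Omega_R^n k,\; \bar\omega\bigr).$$
Combining this with the syzygy decomposition used for $\P_4$ and observing that $n \geq \dim R$ forces both $n$ and $n-1$ to be at least $\dim \bar R$, I conclude that $(\Omega_R^n k)^\dag / x (\Omega_R^n k)^\dag$ lies in $\Add\{(\Omega_{\bar R}^m k)^{\dag'} \mid m \geq \dim \bar R\}$, where $\dag'$ denotes dualizing by $\bar\omega$; this delivers (P1). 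For (P3), Cohen--Macaulayness with $\depth R = 0$ forces $\dim R = 0$, so the $\P_4$ analysis applies to each $\Omega_R^n k$; since $\Soc R \cdot N = 0$ trivially implies $\Soc R \cdot \Hom_R(N, \omega) = 0$ (values of an $R$-linear map commute with the socle action), the annihilation transfers to the duals and then, through $\Add$, to every object satisfying $\P_5$.

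The main obstacle is the syzygy decomposition $\Omega_R^n k / x \Omega_R^n k \cong \Omega_{\bar R}^n k \oplus \Omega_{\bar R}^{n-1} k$. While standard, it is the only nontrivial ingredient in both (P1) verifications, so I would either derive it by carefully tracking what happens to a minimal $R$-free resolution of $k$ upon reduction modulo $x$ (using crucially that $x$ is part of a minimal generating set of $\m$) or invoke it from the literature.
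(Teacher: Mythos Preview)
Your proof is correct and follows essentially the same approach as the paper's. Both arguments rely on the syzygy decomposition $\Omega_R^n k / x\,\Omega_R^n k \cong \Omega_{\bar R}^n k \oplus \Omega_{\bar R}^{n-1} k$ for (P1) (the paper cites Takahashi \cite{T} for this) and on the socle annihilation $\Soc R \subset \Ann_R(\Omega_R^n k)$ for (P3) (the paper cites \cite{GGP}), with the $\P_5$ case handled by transporting these two facts through the canonical dual via the base-change isomorphism $(\Omega_R^n k)^\dag \otimes_R \bar R \cong (\Omega_R^n k \otimes_R \bar R)^{\dag'}$ for maximal Cohen--Macaulay modules (which you derive from Ext-vanishing and the paper cites from \cite{BH}).
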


\begin{proof}
We prove that $\P_5$ is a $(\B)$-property. The other can be proved similarly.

Let $(R,\m,k)$ be a local ring, and put $d=\dim R$. Assume that an $R$-module $M$ is a direct summand of $\bigoplus_{n\geq d}((\Omega_R^nk)^\dag)^{\oplus j_n}$ and $x\in \m\setminus\m^2$ is an $R$-regular element; note then that $d>0$. We set $\overline{(-)}\coloneqq(-)\otimes_RR/xR$. Then $\dim\overline{R}=d-1$. For any $n\geq d$, $\Omega_R^nk$ is a maximal Cohen--Macaulay module by the depth lemma, and therefore $\overline{(\Omega_R^nk)^\dag}\simeq(\overline{\Omega_{R}^nk})^\dag$ by \cite[Proposition 3.3.3]{BH}. Moreover, $\overline{\Omega_{R}^nk}\simeq\Omega_{\overline{R}}^nk\oplus\Omega_{\overline{R}}^{n-1}k$ by \cite[Corollary 5.3]{T}. Hence, $\overline{M}$ is a direct summand of
\[\overline{\bigoplus_{n\geq d}((\Omega_R^nk)^\dag)^{\oplus j_n}}\simeq \bigoplus_{n\geq d}(((\Omega_{\overline{R}}^nk)^\dag)^{\oplus j_n}\oplus((\Omega_{\overline{R}}^{n-1}k)^\dag)^{\oplus j_n}).\]
Therefore, $\P_5$ satisfies (P1).

Next, assume $d=0$ and $M$ is a direct summand of $\bigoplus_{n\geq 0}((\Omega_R^nk)^\dag)^{\oplus j_n}$. We have $\Soc R\subset\Ann_R(\Omega_R^nk)$ for all $n\geq0$ by \cite[Lemma 2.1]{GGP}, and therefore
\[\Soc R\subset \bigcap_{n\geq0}\Ann_R(\Omega_R^nk)=\bigcap_{n\geq0}\Ann_R(\Omega_R^nk)^\dag\subset\Ann_R\Bigl(\bigoplus_{n\geq 0}((\Omega_R^nk)^\dag)^{\oplus j_n}\Bigr)\subset\Ann_RM.\]
Hence, $\P_5$ satisfies (P2).
\end{proof}

Next, we consider modules satisfying some $(\A)$- or $(\A')$- or $(\B)$- or $(\B')$-property.

\begin{dfn}\label{dfn PE}
Let $\E\in\{\A,\A',\B,\B'\}$. We say that a module $M$ satisfies $\P_{\E}$ if there exists an $(\E)$-property which $M$ satisfies.
\end{dfn}

It is easy to see that $\P_{\A}$, $\P_{\A'}$, $\P_{\B}$, $\P_{\B'}$ are respectively $(\A)$-, $(\A')$-, $(\B)$-, $(\B')$-properties. Moreover, one has $\P_{\A}(R)\subset\P_{\A'}(R)$ and $\P_{\B}(R)\subset\P_{\B'}(R)$ for every local ring $R$ since $(\A)$- and $(\B)$-properties are respectively $(\A')$- and $(\B')$-properties.

With these preparations, we obtain the following criterion for a local ring to be regular.

\begin{thm}\label{thm regularity}
Let $R$ be a local ring. If there exists an $R$-module $M$ such that $M\in \P_{\A'}(R)\cap\P_{\B'}(R)$, then $R$ is regular.
\end{thm}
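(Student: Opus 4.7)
The plan is to proceed by induction on $\depth R$. Since it was already noted that $\P_{\A'}$ is an $(\A')$-property and $\P_{\B'}$ is a $(\B')$-property, the hypothesis $M\in\P_{\A'}(R)\cap\P_{\B'}(R)$ lets me invoke (P2) and (P1') via $\P_{\A'}$ and (P3) and (P1') via $\P_{\B'}$, all applied to the single module $M$.

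For the base case $\depth R=0$, the strategy is to force $R=k$ by contradiction. Supposing $R\neq k$, (P2) applied to $\P_{\A'}$ yields $\Soc R\not\subset\Ann_R M$, while (P3) applied to $\P_{\B'}$ yields $\Soc R\subset\Ann_R M$; the contradiction forces $R=k$, which is a field and hence regular.

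For the inductive step $\depth R>0$, I would apply (P1') to $M$ through both $\P_{\A'}$ and $\P_{\B'}$, obtaining two finite lists of non-maximal primes whose union I call $\p_1,\dots,\p_n$. I then select an $R$-regular element $x\in\m\setminus(\m^2\cup\bigcup_{i=1}^n\p_i)$; the two instances of (P1') then combine to give $M/xM\in\P_{\A'}(R/xR)\cap\P_{\B'}(R/xR)$. Since $\depth(R/xR)=\depth R-1$, the induction hypothesis yields that $R/xR$ is regular, and because $x\in\m\setminus\m^2$ is $R$-regular, this lifts to regularity of $R$ by the standard fact on quotients by a regular parameter outside $\m^2$.

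The hard part is the prime avoidance step inside the inductive case. The $R$-regular elements of $\m$ are $\m\setminus\bigcup_{\q\in\Ass R}\q$, and since $\depth R>0$ every $\q\in\Ass R$ is non-maximal, so the task reduces to avoiding $\m^2$ together with a finite collection of non-maximal primes. For each non-maximal prime $\p\subsetneq\m$, Nakayama applied to $\m/\p$ over $R/\p$ forces $\p+\m^2\subsetneq\m$, so its image in the $k$-vector space $\m/\m^2$ is a proper subspace. Over an infinite residue field, a finite union of proper subspaces is proper and any non-covered vector lifts to the desired $x$. Over a finite residue field this can genuinely fail, and the standard remedy is to pass to the faithfully flat extension $R[Y]_{\m R[Y]}$, whose residue field is infinite and which is regular if and only if $R$ is; however, one then has to arrange the transfer of the abstract properties $\P_{\A'}$ and $\P_{\B'}$ across this base change, which is the delicate technical point of the argument.
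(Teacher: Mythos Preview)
Your outline is exactly the paper's proof: induction on $\depth R$, with the base case $\depth R=0$ forced to $R=k$ by playing (P2) against (P3), and the inductive step handled by invoking (P1') for both $\P_{\A'}$ and $\P_{\B'}$, then selecting a regular $x\in\m\setminus\m^2$ by prime avoidance and descending to $R/xR$.

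The only issue is your final paragraph, which manufactures a difficulty that is not there. The standard form of prime avoidance already allows one (in fact two) of the avoided sets to be an arbitrary ideal rather than a prime: if $I\subset J\cup\p_1\cup\cdots\cup\p_n$ with all $\p_i$ prime, then $I\subset J$ or $I\subset\p_i$ for some $i$. Apply this with $I=\m$, $J=\m^2$, and the $\p_i$ ranging over $\Ass R$ together with the finitely many non-maximal primes supplied by the two instances of (P1'). Since $\m\not\subset\m^2$ (Nakayama would give $\m=0$, contradicting $\depth R>0$) and each $\p_i$ is a proper subideal of $\m$, the union cannot cover $\m$, and any uncovered element is the desired $x$. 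No hypothesis on the residue field is needed; the paper accordingly just writes ``by prime avoidance'' and moves on.

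Your proposed detour through $R'\coloneqq R[Y]_{\m R[Y]}$ is therefore unnecessary, and worse, it would open a genuine gap: the properties $\P_{\A'}$ and $\P_{\B'}$ are defined purely existentially (``there is some $(\A')$-property\dots'') and nothing in the framework guarantees they are preserved under base change to $R'$. So drop the last paragraph and simply cite prime avoidance in the form above.
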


\begin{proof}
We use induction on $t\coloneqq\depth R$. If $t=0$, then $R=k$ by (P2) and (P3). Hence $R$ is regular.

Suppose that $t>0$. By (P2) and (P3), there exist non-maximal prime ideals $\p_1,\dots,\p_m$ and $\q_1,\dots\q_n$ such that $M/xM \in \P_{\A'}(R/xR)$ for every $R$-regular element $x\in\m\setminus(\m^2\cup(\bigcup_{i=1}^m\p_i))$ and $M/yM \in \P_{\B'}(R/yR)$ for every $R$-regular element $y\in\m\setminus(\m^2\cup(\bigcup_{j=1}^n\q_j))$. Now we can take an $R$-regular element $x\in\m\setminus(\m^2\cup(\bigcup_{i=1}^m\p_i)\cup(\bigcup_{j=1}^n\q_j))$ by prime avoidance. Then we have $M/xM \in \P_{\A'}(R/xR)\cap\P_{\B'}(R/xR)$ and therefore, by the induction hypothesis, $R/xR$ is regular. Since $x\in\m\setminus\m^2$, $R$ is regular.
\end{proof}

We will observe later that Theorem \ref{thm regularity} gives rise to several applications. We introduce $(\C)$- and $(\C')$-relations.

\begin{dfn}
Let $\leq$ be a relation of modules over local rings. Consider the following conditions for $\leq$.
\begin{enumerate}
\item[(R1)]
Let $R$ be a local ring with maximal ideal $\m$, and let $M,N$ be $R$-modules. If $M\uleq{R}N$, then $M/xM\uleq{R/xR}N/xN$ for every $R$-regular element $x\in\m\setminus\m^2$.
\item[(R1')] 
Let $R$ be a local ring with maximal ideal $\m$, and let $M,N$ be $R$-modules. If $\depth R>0$ and $M\uleq{R}N$, then there exist finitely many non-maximal prime ideals $\p_1,\dots,\p_n$ such that $M/xM\uleq{R/xR}N/xN$ for every $R$-regular element $x\in\m\setminus(\m^2\cup(\bigcup_{i=1}^n\p_i))$.
\item[(R2)]
Let $R$ be a local ring, and let $M,N$ be $R$-modules. If $\depth R=0$ and $M\uleq{R}N$, then $\Ann_RM\subset\Ann_RN$.
\end{enumerate}
We say that $\leq$ is a {\em $(\C)$-relation} (resp. {\em $(\C')$-relation}) if $\leq$ satisfies (R1) and (R2) (resp. (R1') and (R2)).
\end{dfn}

It is clear that (R1) implies (R1'). Therefore, a $(\C)$-relation is a $(\C')$-relation.

We give some examples of $(\C)$- and $(\C')$-relations.

\begin{prop}\label{ex C relation}
We define the relations $\leq_1,\leq_2,\leq_3,\leq_4$ as follows:
\begin{enumerate}[\rm(1)]
\item 
We say that modules $M,N$ over a local ring $R$ satisfy $M\usleq{R}{1} N$ if there exists a surjective $R$-module homomorphism $M\to N$.
\item 
We say that modules $M,N$ over a local ring $R$ satisfy $M\usleq{R}{2} N$ if there exists a short exact sequence
\[0\to N\to M\to L\to 0\]
of $R$-modules such that $L$ is a balanced big Cohen--Macaulay $R$-module.
\item
We say that modules $M,N$ over a local ring $R$ satisfy $M\usleq{R}{3} N$ if there exists a short exact sequence
\[0\to N\to M\to L\to 0\]
of $R$-modules such that $L\in\Add K$ for some deep $R$-module $K$.
\item 
We say that modules $M,N$ over a local ring $R$ satisfy $M\usleq{R}{4} N$ if there exists an $R$-module $L$ such that $N\simeq M\otimes_RL$.
\end {enumerate}
Then, $\leq_1$, $\leq_2$, $\leq_4$ are all $(\C)$-relations, while $\leq_3$ is a $(\C')$-relation.
\end{prop}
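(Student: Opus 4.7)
My plan is to verify both defining axioms for each of the four relations. The axiom (R2) will be immediate in every case: the relations $\leq_1$, $\leq_2$, and $\leq_3$ present $N$ either as a quotient (for $\leq_1$) or as a submodule (for $\leq_2$, $\leq_3$) of $M$, while $\leq_4$ writes $N \simeq M \otimes_R L$; in all four situations one obtains $\Ann_R M \subset \Ann_R N$ unconditionally, with no hypothesis on $\depth R$ even required. The substantive content therefore lies in checking (R1) or (R1').

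For $\leq_1$ and $\leq_4$ I expect the reduction modulo $x$ to be formal. A surjection $M \to N$ remains surjective after applying $(-)\otimes_R R/xR$ by right exactness. If $N \simeq M \otimes_R L$, then associativity of tensor products gives $N/xN \simeq (M/xM) \otimes_{R/xR} (L/xL)$, exhibiting $M/xM \usleq{R/xR}{4} N/xN$ via $L/xL$. Neither argument needs to exclude any primes, so both $\leq_1$ and $\leq_4$ will satisfy the stronger axiom (R1).

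For $\leq_2$ the crucial step is to show that every $R$-regular $x \in \m\setminus\m^2$ is automatically $L$-regular. Such an $x$ lies in some minimal generating set of $\m$, so it can be completed to a system of parameters of $R$; by the balanced big Cohen--Macaulay hypothesis on $L$, $x$ then begins an $L$-sequence. Consequently $\operatorname{Tor}_1^R(L, R/xR) \simeq (0:_L x) = 0$, and tensoring $0 \to N \to M \to L \to 0$ with $R/xR$ yields $0 \to N/xN \to M/xM \to L/xL \to 0$. A short check --- lifting any system of parameters of $R/xR$ by prepending $x$ --- confirms that $L/xL$ remains balanced big Cohen--Macaulay over $R/xR$, so (R1) holds and $\leq_2$ is a $(\C)$-relation.

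The relation $\leq_3$ is structurally parallel, but $K$-regularity of $x$ no longer comes for free from $R$-regularity, and this is where the main obstacle lies: it is precisely what forces us to settle for (R1') rather than (R1). When $\depth R > 0$, one has $\depth K \geq \depth R > 0$, so $\m \notin \Ass K$ and $\Ass K$ is a finite set of non-maximal primes --- these will serve as the excluded $\p_1,\dots,\p_n$. For any $R$-regular $x$ avoiding $\Ass K$, $x$ is $K$-regular, and since $L$ is a direct summand of a direct sum of copies of $K$ one has $\Ass L \subset \Ass K$, so $x$ is $L$-regular too. The same Tor-vanishing argument as for $\leq_2$ then produces $0 \to N/xN \to M/xM \to L/xL \to 0$ with $L/xL \in \Add(K/xK)$, and the deepness of $K/xK$ over $R/xR$ follows from $\depth_{R/xR}(K/xK) = \depth_R K - 1 \geq \depth R - 1 = \depth(R/xR)$, completing the verification.
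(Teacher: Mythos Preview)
Your proposal is correct and follows essentially the same route as the paper: verify (R2) by the obvious annihilator inclusions, handle $\leq_1$ and $\leq_4$ by right exactness and associativity of tensor, and for $\leq_2$, $\leq_3$ argue that the chosen $x$ is $L$-regular so that the short exact sequence survives reduction mod $x$, with the cokernel retaining its required property. One small wording issue in your $\leq_2$ paragraph: the fact that $x\in\m\setminus\m^2$ lies in a minimal generating set of $\m$ is not what lets you extend $x$ to a system of parameters (that implication fails in general); rather, it is the $R$-regularity of $x$, which forces $\dim R/xR=\dim R-1$, that does the job---but your conclusion is correct and the rest of the argument goes through unchanged.
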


\begin{proof}
Let $R$ be a local ring with maximal ideal $\m$.

$\leq_1$: Let $M,N$ be $R$-modules and assume that there is a surjective  $R$-module homomorphism $M\to N$. Then the induced map $M/xM\to N/xN$ is a surjective $R$-module homomorphism for every $x\in R$, and $\Ann_RM\subset\Ann_RN$.

$\leq_2$: This is proved in the same way as $\leq_3$ below.

$\leq_3$: Let $M,N$ be $R$-modules and assume that there exists a short exact sequence
\[0\to N\to M\to L\to 0\]
of $R$-modules such that $L\in\Add K$ for some deep $R$-module.

Suppose $\depth R>0$. Since $\depth K>0$, the set $\{\p_1,\dots,\p_n\}\coloneqq\Ass K$ consists of non-maximal prime ideals of $R$. Let $x\in\m\setminus(\m^2\cup(\bigcup_{i=1}^n\p_i))$ be an $R$-regular element. Since $x$ is an $L$-regular element, we have a short exact sequence
\[0\to N/xN\to M/xM\to L/xL\to0\]
of $R/xR$-modules by \cite[Proposition 1.1.4]{BH}. Moreover, $K/xK$ is a deep $R/xR$-module and $L/xL\in \Add K$. Therefore, $\leq_3$ satisfies (R1').

In addition, $\Ann_RM\subset\Ann_RN$ since the map $N\to M$ is an injective $R$-module homomorphism. Hence, $\leq_3$ satisfies (R2).

$\leq_4$: Let $M,N$ be $R$-modules and assume that there exists an $R$-module $L$ such that $N\simeq M\otimes_RL$. Then, we have $N/xN\simeq M/xM\otimes_{R/xR}L/xL$ for every $x\in R$, and $\Ann_RM\subset\Ann_RN$.
\end{proof}

In Definition \ref{dfn PE}, we defined the property $\P_{\E}$ for each $\E\in\{\A,\A',\B,\B'\}$. Similarly to this, we make the following definition.

\begin{dfn}\label{dfn preceq}
We say that modules $M,N$ over a local ring $R$ satisfy $M\upreceq{R}N$ (resp. $M\updR N$) if there exists a $(\C)$-relation (resp. $(\C')$-relation) $\leq$ such that $M\uleq{R}N$.
\end{dfn}

It is easy to see that $\preceq$, $\preceq'$ are respectively $(\C)$-, $(\C')$-relations. Moreover, for modules $M,N$ over a local ring $R$, if $M\upreceq{R}N$, then $M\updR N$, since a $(\C)$-relation is a $(\C')$-relation.

We present several properties of $\preceq$ and $\preceq'$.

\begin{prop}\label{prop property relation}
Let $(R,\m)$ be a local ring, and let $X,Y$ be $R$-modules. Then the following hold:
\begin{enumerate}[\rm(1)]
\item 
If $X\upreceq{R}Y$ and $Y\in\P_{\A}(R)$, then $X\in\P_{\A}(R)$.
\item 
If $X\updR Y$ and $Y\in\P_{\A'}(R)$, then $X\in\P_{\A'}(R)$.
\item 
If $X\upreceq{R}Y$ and $X\in\P_{\B}(R)$, then $Y\in\P_{\B}(R)$.
\item 
If $X\updR Y$ and $X\in\P_{\B'}(R)$, then $Y\in\P_{\B'}(R)$.
\end{enumerate}
\end{prop}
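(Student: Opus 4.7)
The four statements are parallel: each asserts that membership in $\P_{\A}$, $\P_{\A'}$, $\P_{\B}$, or $\P_{\B'}$ transfers along $\preceq$ or $\preceq'$ in one of two directions. My plan is to prove them uniformly by a ``packaging'' construction: given a witness $(\C)$- or $(\C')$-relation $\leq$ for $X\preceq Y$ (or $X\preceq' Y$) together with a witness $(\A)$-, $(\A')$-, $(\B)$-, or $(\B')$-property $\P$ for whichever of $X,Y$ is in it, cook up a new property $\P'$ built from $\P$ and $\leq$, show it belongs to the appropriate class, and then read off the desired membership directly from its definition.

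For (1) and (2), where the hypothesis places $Y$ in a $(\A)$- or $(\A')$-property, I would define $\P'$ by declaring an $R$-module $M$ to satisfy $\P'$ if and only if there exists an $R$-module $N$ with $N\in\P(R)$ and $M\uleq{R}N$. Then $X\in\P'(R)$ holds by assumption. Verifying (P1) for (1): if $M\in\P'(R)$ with witness $N$ and $x\in\m\setminus\m^2$ is $R$-regular, then (R1) gives $M/xM\uleq{R/xR}N/xN$ and (P1) gives $N/xN\in\P(R/xR)$, so $M/xM\in\P'(R/xR)$. For (P2): if $\depth R=0$, then (R2) gives $\Ann_RM\subset\Ann_RN$ and (P2) gives $\Soc R\not\subset\Ann_RN$, so $\Soc R\not\subset\Ann_RM$. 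This shows $\P'$ is an $(\A)$-property, hence $X\in\P_{\A}(R)$. For (2) the verification is identical in spirit, except that in the (P1') step one obtains two finite sets of non-maximal primes, one from (R1') applied to $M\uleq{R}N$ and one from (P1') applied to $N$, and must simply take their union; any $R$-regular element of $\m\setminus\m^2$ avoiding the union then satisfies both conclusions simultaneously.

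For (3) and (4), where the hypothesis places $X$ in a $(\B)$- or $(\B')$-property, the direction is reversed, so I would define $\Q$ by declaring $M$ to satisfy $\Q$ if and only if there exists $N\in\P(R)$ with $N\uleq{R}M$; then $Y\in\Q(R)$. Condition (P1) or (P1') is checked exactly as above (using (R1)/(R1') to pass to $N/xN\uleq{R/xR}M/xM$ and (P1)/(P1') to keep $N/xN\in\P(R/xR)$). For (P3) with $\depth R=0$ and $R\neq k$, one has $\Soc R\subset\Ann_RN$ by (P3) applied to $\P$, and $\Ann_RN\subset\Ann_RM$ by (R2) applied to $\leq$, giving $\Soc R\subset\Ann_RM$. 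So $\Q$ is a $(\B)$-property (respectively $(\B')$-property in (4)), and therefore $Y\in\P_{\B}(R)$ (respectively $\P_{\B'}(R)$).

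The argument is essentially bookkeeping, and there is no genuinely hard step: it is just a matter of composing the inequalities from the relation axioms with the annihilator and base-change conclusions of the property axioms in the correct direction. The only point requiring the slightest care is in (2) and (4), where the two relevant finite sets of non-maximal primes (one from the relation, one from the property) must be combined by a union before the $(\A')$- or $(\B')$-property of the packaged $\P'$/$\Q$ can be asserted; since a finite union of finite sets is finite, this causes no trouble.
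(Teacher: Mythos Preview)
Your proof is correct and follows essentially the same approach as the paper's: the paper also proves only (2) explicitly, defining a new property by ``$M$ satisfies it iff there exists $N\in\P_{\A'}(R)$ with $M\updR N$'' and then verifying (P1') and (P2) by exactly the union-of-primes and annihilator-inclusion arguments you describe. The only cosmetic difference is that the paper packages using the universal $\P_{\A'}$ and $\preceq'$ rather than specific witnesses $\P$ and $\leq$, but since $\P_{\A'}$ and $\preceq'$ are themselves an $(\A')$-property and a $(\C')$-relation, the verifications are identical.
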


\begin{proof}
We prove only (2). The others can be proved in the same way. We define the property $\P$ as follows:
\begin{center}
We say that a module $M$ over a local ring $R$ satisfies $\P$ if there exists\\an $R$-module $N\in\P_{\A'}(R)$ such that $M\updR N$.
\end{center}
Since $X\in \P(R)$, it suffices to prove that $\P$ is an $(\A')$-property.

(P1') Suppose that $M\in \P(R)$ and $\depth R>0$. Then there exists $N\in\P_{\A'}(R)$ such that $M\updR N$. Hence, there exist non-maximal prime ideals $\p_1,\dots,\p_m$ and $\q_1,\dots\q_n$ such that $N/xN \in \P_{\A'}(R/xR)$ for every $R$-regular element $x\in\m\setminus(\m^2\cup(\bigcup_{i=1}^m\p_i))$ and $M/yM\updRy N/yN$ for every $R$-regular element $y\in\m\setminus(\m^2\cup(\bigcup_{j=1}^n\q_j))$. Then for every $R$-regular element $x\in\m\setminus(\m^2\cup(\bigcup_{i=1}^m\p_i)\cup(\bigcup_{j=1}^n\q_j))$, one has $N/xN \in \P_{\A'}(R/xR)$ and $M/xM\updRx N/xN$, and therefore $M/xM\in\P(R/xR)$.

(P2) Assume that $\depth R=0$. If $M\in\P(R)$, then there exists $N\in\P_{\A'}(R)$ such that $M\updR N$. Since $\Soc R\not\subset\Ann_RN$ and $\Ann_RM\subset\Ann_RN$, we have $\Soc R\not\subset\Ann_RM$.

Therefore, $\P$ is an $(\A')$-property and this completes the proof of the proposition.
\end{proof}

By this proposition, we obtain a generalization of Theorem \ref{thmGGP}.

\begin{cor}\label{cor property relation}
Let $R$ be a local ring. Suppose that there exist $R$-modules $M\in\P_{\B'}(R)$ and $N\in\P_{\A'}(R)$ such that $M\updR N$. Then R is regular.
\end{cor}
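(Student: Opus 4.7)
The plan is to combine Proposition \ref{prop property relation}(2) with Theorem \ref{thm regularity}; essentially all of the real work has been done in the preceding results, and what remains is just to funnel the hypotheses through them.

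First I would observe that the hypotheses $M\updR N$ and $N\in\P_{\A'}(R)$ fit exactly the setup of Proposition \ref{prop property relation}(2). Applying that proposition yields $M\in\P_{\A'}(R)$. Thus the single module $M$ now carries both properties: $M\in\P_{\A'}(R)$ (by what we just deduced) and $M\in\P_{\B'}(R)$ (by hypothesis). In particular, $M\in\P_{\A'}(R)\cap\P_{\B'}(R)$.

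Second, I would invoke Theorem \ref{thm regularity}, which says precisely that the existence of an $R$-module lying in $\P_{\A'}(R)\cap\P_{\B'}(R)$ forces $R$ to be regular. Applying it to $M$ finishes the proof.

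There is no real obstacle here: the content is already packaged in the earlier lemmas. The only mild subtlety is to notice that one must use part (2) of Proposition \ref{prop property relation} (transferring $\P_{\A'}$ backwards along $\preceq'$), rather than parts (3) or (4) which transfer $\P_{\B}$ or $\P_{\B'}$ forwards; the asymmetric directionality in which $\preceq'$ propagates the two kinds of properties is exactly what makes $M$ (not $N$) the module that ends up lying in both classes simultaneously.
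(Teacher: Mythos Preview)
Your proof is correct and matches the paper's own argument essentially line for line: apply Proposition \ref{prop property relation}(2) to obtain $M\in\P_{\A'}(R)$, then invoke Theorem \ref{thm regularity} on $M\in\P_{\A'}(R)\cap\P_{\B'}(R)$. Your extra remark on why part (2) (rather than (3) or (4)) is the relevant direction is accurate commentary but not needed for the formal proof.
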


\begin{proof}
By Proposition \ref{prop property relation}, we have $M\in\P_{\A'}(R)$. Since $M\in\P_{\A'}(R)\cap\P_{\B'}(R)$, $R$ is regular by Theorem \ref{thm regularity}.
\end{proof}

Corollary \ref{cor intro} follows by combining the above corollary with Propositions \ref{ex B property} and \ref{ex C relation}.

The proposition below shows the transitivity of the relations $\preceq$ and $\preceq'$.

\begin{prop}\label{prop transitivity}
Let $(R,\m)$ be a local ring, and let $X,Y,Z$ be $R$-modules. Then the following hold:
\begin{enumerate}[\rm(1)]
\item 
If $X\upreceq{R}Y$ and $Y\upreceq{R}Z$, then $X\upreceq{R}Z$.
\item 
If $X\updR Y$ and $Y\updR Z$, then $X\updR Z$.
\end{enumerate}
\end{prop}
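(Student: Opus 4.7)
The plan is to construct, in each of the two cases, an auxiliary relation whose definition encodes ``being related via an intermediate module'', verify that this auxiliary relation satisfies the axioms of a $(\C)$-relation (resp.\ $(\C')$-relation), and then invoke Definition~\ref{dfn preceq} to conclude. Concretely, for part (1) I would define a relation $\leq^{*}$ on modules over local rings by declaring $M\leq^{*}_R N$ to hold if and only if there exists an $R$-module $P$ with $M\upreceq{R}P$ and $P\upreceq{R}N$, and then show that $\leq^{*}$ is a $(\C)$-relation. Once this is established, taking $P=Y$ yields $X\leq^{*}_R Z$, and hence $X\upreceq{R}Z$ by Definition~\ref{dfn preceq}.

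To verify (R1) for $\leq^{*}$, I would start from $M\leq^{*}_R N$ with witness $P$ and an $R$-regular element $x\in\m\setminus\m^2$. Because $\preceq$ is itself a $(\C)$-relation (as noted after Definition~\ref{dfn preceq}), applying (R1) to $M\upreceq{R}P$ and then to $P\upreceq{R}N$ gives $M/xM\upreceq{R/xR}P/xP$ and $P/xP\upreceq{R/xR}N/xN$, so $P/xP$ witnesses $M/xM\leq^{*}_{R/xR}N/xN$. For (R2), assume $\depth R=0$; then (R2) for $\preceq$ gives $\Ann_RM\subset\Ann_RP\subset\Ann_RN$, which is what is required. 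Thus $\leq^{*}$ is a $(\C)$-relation and part (1) follows.

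For part (2), I would repeat the argument verbatim with $\preceq'$ in place of $\preceq$ and with the $(\C')$-relation axioms. The only step that requires a bit of care, and which I consider the main technical point, is (R1'): one must combine two independently produced finite sets of non-maximal primes. Explicitly, assuming $\depth R>0$ and $M\updR P$, $P\updR N$, the $(\C')$-relation property of $\preceq'$ supplies finitely many non-maximal primes $\p_1,\dots,\p_m$ such that $M/xM\updRx P/xP$ for every $R$-regular $x\in\m\setminus(\m^2\cup(\bigcup_{i=1}^m\p_i))$, and finitely many non-maximal primes $\q_1,\dots,\q_n$ such that $P/xP\updRx N/xN$ for every $R$-regular $x\in\m\setminus(\m^2\cup(\bigcup_{j=1}^n\q_j))$. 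The union $\{\p_1,\dots,\p_m,\q_1,\dots,\q_n\}$ is then the desired finite collection of non-maximal primes for the composite relation, and (R2) is handled exactly as in part (1).

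Essentially no hard step remains once the auxiliary relation is set up: the argument amounts to observing that the property ``to be a $(\C)$- (resp.\ $(\C')$-) relation'' is stable under one-step composition through an intermediate module, which in turn reflects the transitivity built into (R1)/(R1') and (R2) individually.
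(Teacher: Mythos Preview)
Your proposal is correct and matches the paper's approach essentially verbatim: the paper also defines the auxiliary relation ``there exists an intermediate module $L$ with $M\preceq' L$ and $L\preceq' N$'', verifies (R1') by taking the union of the two finite sets of non-maximal primes and (R2) by transitivity of annihilator inclusion, and then invokes Definition~\ref{dfn preceq}. The only cosmetic difference is that the paper writes out part (2) and declares part (1) analogous, whereas you do the reverse.
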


\begin{proof}
We prove only (2), since (1) can be proved in the same way. We define the relation $\leq$ as follows:
\begin{center}
We say that modules $M,N$ over a local ring $R$ satisfy $M\uleq{R}N$ if there exists\\an $R$-module $L$ such that $M\updR L$ and $L\updR N$.
\end{center}
Since $M\uleq{R}N$, it suffices to prove that $\leq$ is a $(\C')$-relation.

(R1') Suppose that $R$-modules $M,N$ satisfy $M\uleq{R}N$ and $\depth R>0$. Then there exists an $R$-module $L$ such that $M\updR L$ and $L\updR N$. Hence, there exist non-maximal prime ideals $\p_1,\dots,\p_m$ and $\q_1,\dots\q_n$ such that $M/xM\updRx L/xL$ for every $R$-regular element $x\in\m\setminus(\m^2\cup(\bigcup_{i=1}^m\p_i))$ and $L/yL\updRy N/yN$ for every $R$-regular element $y\in\m\setminus(\m^2\cup(\bigcup_{j=1}^n\q_j))$. Then for every $R$-regular element $x\in\m\setminus(\m^2\cup(\bigcup_{i=1}^m\p_i)\cup(\bigcup_{j=1}^n\q_j))$, one has $M/xM\updRx L/xL$ and $L/xL\updRx N/xN$, and therefore $M/xM\uleq{R/xR}N/xN$.

(R2) Suppose that $R$-modules $M,N$ satisfy $M\uleq{R}N$ and $\depth R=0$. Then there exists an $R$-module $L$ such that $M\updR L$ and $L\updR N$. Since $\Ann_RM\subset\Ann_RL$ and $\Ann_RL\subset\Ann_RN$, we have $\Ann_RM\subset\Ann_RN$.

Therefore, $\leq$ is a $(\C')$-relation and this completes the proof of the proposition.
\end{proof}

We can extend Corollary \ref{cor property relation} as follows by using the above proposition.

\begin{cor}\label{cor transitivity}
Let $R$ be a local ring. Suppose that there exist a finite number of $R$-modules $M_1,M_2,\dots,M_n$ such that $M_1\in\P_{\B'}(R)$ and $M_n\in\P_{\A'}(R)$, and $M_1\updR M_2\updR\cdots\updR M_n$. Then R is regular.
\end{cor}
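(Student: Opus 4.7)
The plan is straightforward: reduce to Corollary \ref{cor property relation} by collapsing the chain using the transitivity established in Proposition \ref{prop transitivity}(2). More precisely, I proceed by induction on the length $n$ of the chain.

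For the base case $n = 2$, the hypothesis is exactly that $M_1 \updR M_2$ with $M_1 \in \P_{\B'}(R)$ and $M_2 \in \P_{\A'}(R)$, so Corollary \ref{cor property relation} directly yields that $R$ is regular. For the inductive step, assume the statement holds for chains of length $n-1$, and suppose we are given $M_1 \updR M_2 \updR \cdots \updR M_n$ with $M_1 \in \P_{\B'}(R)$ and $M_n \in \P_{\A'}(R)$. Applying Proposition \ref{prop transitivity}(2) to the first two links yields $M_1 \updR M_3$, so we obtain a shortened chain $M_1 \updR M_3 \updR \cdots \updR M_n$ of length $n-1$ whose endpoints still satisfy the required hypotheses, and the induction hypothesis applies.

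Alternatively, one can iterate Proposition \ref{prop transitivity}(2) directly to conclude $M_1 \updR M_n$ in one step, after which Corollary \ref{cor property relation} immediately gives regularity. There is no substantive obstacle here, since all of the work has been front-loaded into establishing that $\preceq'$ is itself a $(\C')$-relation (Proposition \ref{prop transitivity}) and that the combination $\P_{\B'} \updR \P_{\A'}$ forces regularity (Corollary \ref{cor property relation}). The present corollary is a formal consequence and requires no further use of the axioms (P1'), (P2), (P3), or (R1'), (R2).
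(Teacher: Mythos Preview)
Your proof is correct and matches the paper's own approach: the paper presents this corollary without proof, immediately after Proposition~\ref{prop transitivity}, remarking only that it extends Corollary~\ref{cor property relation} ``by using the above proposition''---i.e., exactly the transitivity argument you give. (For completeness you might note the trivial case $n=1$, where $M_1\in\P_{\A'}(R)\cap\P_{\B'}(R)$ and Theorem~\ref{thm regularity} applies directly.)
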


The following proposition shows that $\preceq$ and $\preceq'$ behave well under direct sums.

\begin{prop}\label{prop add}
Let $(R,\m)$ be a local ring. Let $X$ be an $R$-module and $\{X_i\}_{i\in I}$ a family of $R$-modules. Then the following hold:
\begin{enumerate}[\rm(1)]
\item 
If $X\upreceq{R}X_i$ for every $i\in I$ and $Y\in\Add\{X_i\mid i\in I\}$, then $X\upreceq{R}Y$.
\item 
If $I$ is a finite set and $X\updR X_i$ for every $i\in I$, and $Y\in\Add\{X_i\mid i\in I\}$, then $X\updR Y$.
\end{enumerate}
\end{prop}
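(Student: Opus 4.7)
The plan is to mirror the strategy of Propositions \ref{prop property relation} and \ref{prop transitivity}: I introduce a new relation $\leq$ that captures the desired conclusion and then verify that $\leq$ is a $(\C)$-relation for part (1) or a $(\C')$-relation for part (2). Explicitly, for (1) I define $M\uleq{R}N$ to mean that there exist an index set $J$ and a family $\{Z_j\}_{j\in J}$ of $R$-modules with $M\upreceq{R}Z_j$ for every $j\in J$ and $N\in\Add\{Z_j\mid j\in J\}$; for (2) I use the analogous definition with $J$ required to be finite and with $\updR$ in place of $\upreceq{R}$. The hypothesis of the proposition, with the family $\{X_i\}_{i\in I}$ serving as witness, directly yields $X\uleq{R}Y$, so once $\leq$ is shown to be a $(\C)$-relation (resp.\ $(\C')$-relation) the conclusion $X\upreceq{R}Y$ (resp.\ $X\updR Y$) follows from Definition \ref{dfn preceq}.

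For the clauses (R1) or (R1'), suppose $M\uleq{R}N$ is witnessed by $\{Z_j\}_{j\in J}$ and pick an $R$-regular $x$ as prescribed. In part (1), (R1) for $\preceq$ gives $M/xM\upreceq{R/xR}Z_j/xZ_j$ for every $j\in J$ and every $x\in\m\setminus\m^2$, requiring no prime avoidance. In part (2), each application of (R1') for $\preceq'$ produces a finite set of non-maximal primes to avoid; since $J$ is finite, the union of these sets over $j\in J$ is still a finite set of non-maximal primes, and for any $R$-regular $x\in\m\setminus\m^2$ avoiding that union we have $M/xM\updRx Z_j/xZ_j$ for every $j\in J$. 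In either case, writing $N\oplus N'\simeq\bigoplus_{j}Z_j^{(\alpha_j)}$ and reducing modulo $x$ yields $N/xN\oplus N'/xN'\simeq\bigoplus_{j}(Z_j/xZ_j)^{(\alpha_j)}$, so $N/xN\in\Add\{Z_j/xZ_j\mid j\in J\}$; thus $\{Z_j/xZ_j\}_{j\in J}$ witnesses $M/xM\uleq{R/xR}N/xN$. For (R2), assume $\depth R=0$ and $M\uleq{R}N$; (R2) applied to each $\preceq$ or $\preceq'$ gives $\Ann_RM\subset\Ann_RZ_j$ for all $j$, hence $\Ann_RM\subset\bigcap_{j}\Ann_RZ_j=\Ann_R\bigl(\bigoplus_{j}Z_j^{(\alpha_j)}\bigr)\subset\Ann_RN$, the last inclusion because $N$ is a direct summand of $\bigoplus_{j}Z_j^{(\alpha_j)}$.

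The main subtlety is why part (2) requires $I$ to be finite while part (1) does not: clause (R1') in a $(\C')$-relation allows $x$ to avoid only a finite set of non-maximal primes, and the union over an infinite index set need not be finite, which would break prime avoidance. In contrast, (R1) for a $(\C)$-relation imposes no constraint on $x$ beyond $x\in\m\setminus\m^2$, so the direct sum argument extends uniformly to arbitrary index sets.
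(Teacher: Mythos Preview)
Your proposal is correct and follows essentially the same approach as the paper's own proof: define an auxiliary relation $\leq$ by the existence of a witnessing family $\{Z_j\}$ with $M$ related to each $Z_j$ and $N\in\Add\{Z_j\}$, then verify (R1)/(R1') and (R2). You are in fact slightly more careful than the paper in making explicit that the index set must be finite in the $(\C')$ case so that the union of prime-avoidance sets remains finite, and your closing paragraph nicely explains the asymmetry between parts (1) and (2).
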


\begin{proof}
We prove only (2), since (1) can be proved in the same way. We define the relation $\leq$ as follows:
\begin{center}
We say that modules $M,N$ over a local ring $R$ satisfy $M\uleq{R}N$ if there exists a family of\\$R$-modules $\{M_i\}_{i\in I}$ such that $M\updR M_i$ for every $i\in I$ and $N\in\Add\{M_i\mid i\in I\}$.
\end{center}
Since $M\uleq{R}N$, it suffices to prove that $\leq$ is a $(\C')$-relation.

(R1') Suppose that $R$-modules $M,N$ satisfy $M\uleq{R}N$ and $\depth R>0$. Then there exists a family of $R$-modules $\{M_i\}_{i\in I}$ such that $M\updR M_i$ for every $i\in I$ and $N\in\Add\{M_i\mid i\in I\}$. Hence, there exist non-maximal prime ideals $\p_{i,j}$ ($i\in I$, $1\leq j\leq n_i$) such that $M/x_iM\updRxi N/x_iN$ for every $i\in I$ and $R$-regular element $x_i\in\m\setminus(\m^2\cup(\bigcup_{j=1}^{n_i}\p_{i,j}))$. Then for every $R$-regular element $x\in\m\setminus(\m^2\cup(\bigcup_{i\in I}\bigcup_{j=1}^{n_i}\p_{i,j}))$, one has $M/xM \updRx M_i/xM_i$ for every $i\in I$ and $N/xN\in\Add\{M_i/xM_i\mid i\in I\}$, and therefore $M/xM\uleq{R}N/xN$.

(R2) Suppose that $R$-modules $M,N$ satisfy $M\uleq{R}N$ and $\depth R=0$. Then there exists a family of $R$-modules $\{M_i\}_{i\in I}$ such that $M\updR M_i$ for every $i\in I$ and $N\in\Add\{M_i\mid i\in I\}$. Since $\Ann_RM\subset\Ann_RM_i$ for every $i\in I$, we have
\[\Ann_RM\subset\bigcap_{i\in I}\Ann_RM_i=\Ann_R\Bigl(\bigoplus_{i\in I}M_i\Bigr)\subset\Ann_RN.\]

Therefore, $\leq$ is a $(\C')$-relation and this completes the proof of the proposition.
\end{proof}

We close the section by presenting another extension of Corollary \ref{cor property relation}, which follows from the above proposition.

\begin{cor}
Let $R$ be a local ring. Let $M,N$ be $R$-modules and $\{M_i\}_{i\in I}$ a family of $R$-modules such that $M\in \P_{B'}(R)$ and $N\in \P_{A'}(R)\cap\Add\{M_i\mid i\in I\}$. Then $R$ is regular if either of the following conditions holds:
\begin{enumerate}[\rm(1)]
\item 
$M\upreceq{R}M_i$ for every $i\in I$.
\item 
$I$ is a finite set and $M\updR M_i$ for every $i\in I$.
\end{enumerate}
\end{cor}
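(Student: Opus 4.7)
The plan is to reduce both cases to Corollary \ref{cor property relation} by producing the relation $M\updR N$. The machinery for this is already in place in Proposition \ref{prop add}, which was designed precisely to let us promote a collection of relations to its sum/summand closure.

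First, in case (1), I would apply Proposition \ref{prop add}(1) directly: the hypotheses there, namely $M\upreceq{R}M_i$ for every $i\in I$ together with $N\in\Add\{M_i\mid i\in I\}$, are exactly what is assumed, so the conclusion $M\upreceq{R}N$ follows immediately. Since every $(\C)$-relation is a $(\C')$-relation, $\preceq$ refines to $\preceq'$, giving $M\updR N$. In case (2), I would apply Proposition \ref{prop add}(2) to the finite family $\{M_i\}_{i\in I}$ and conclude $M\updR N$ the same way.

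Once $M\updR N$ is in hand, the hypotheses $M\in\P_{\B'}(R)$ and $N\in\P_{\A'}(R)$ put us exactly in the setting of Corollary \ref{cor property relation}, which yields the regularity of $R$.

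There is really no obstacle here; this corollary is essentially a bookkeeping consequence of the two tools developed above. The only minor point to double-check is the direction of the relation: Proposition \ref{prop add} takes a single ``source'' module $M$ and a family of ``targets'' $\{M_i\}$, and produces a relation from $M$ to any object in $\Add\{M_i\mid i\in I\}$. Since $N$ is assumed to lie in that additive closure, the direction matches the one required by Corollary \ref{cor property relation}, namely from the $\P_{\B'}$-object to the $\P_{\A'}$-object. Hence the proof is a two-line application of the already established results.
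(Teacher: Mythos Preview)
Your proof is correct and matches the paper's intended argument exactly: the paper simply states that the corollary ``follows from the above proposition'' (Proposition \ref{prop add}), and you have spelled out precisely how---apply Proposition \ref{prop add}(1) or (2) to obtain $M\updR N$, then invoke Corollary \ref{cor property relation}.
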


\begin{ac}
The author would like to thank his supervisor Ryo Takahashi for giving many thoughtful questions and helpful discussions. He also thanks Kaito Kimura and Yuki Mifune, and Yuya Otake for their valuable comments.
\end{ac}

\end{document}